\newtheorem{theorem}{Theorem}[section]
\newtheorem{corollary}[theorem]{Corollary}
\newtheorem{lemma}[theorem]{Lemma}
\newtheorem{proposition}[theorem]{Proposition}
\theoremstyle{definition}
\newtheorem{definition}[theorem]{Definition}
\theoremstyle{remark}
\newtheorem{example}{Example}
\title{The Statistic $\mathtt{pinv}$ for Number System}
\author{Patrick Rabarison \thanks{P. Rabarison \\ Université d'Antananarivo, Département de Mathématiques et Informatique, 101 Antananarivo, Madagascar \\
e-mail: \texttt{prabarison@gmail.com}}
, 
Hery Randriamaro \thanks{H. Randriamaro (Corresponding Author)\\ Université d'Antananarivo, Département de Mathématiques et Informatique, 101 Antananarivo, Madagascar \\
e-mail: \texttt{hery.randriamaro@outlook.com}}}
\begin{document}

\maketitle

\begin{abstract}
\noindent The number of inversions is a statistic on permutation groups measuring the degree to which the entries of a permutation are out of order. We provide a generalization of that statistic by introducing the statistic number of pseudoinversions on the colored permutation groups. The main motivation to investigate that statistic is the possibility to use it to define a number system and a numeral system on the colored permutation groups. By means of the statistic number of $i$-pseudoinversions, we construct our number system, and a bijection between the set of positive integers and the colored permutation groups.

\bigskip 

\noindent \textsl{Keywords}: Permutation Group, Inversion Number, Numeral System.

\smallskip

\noindent \textsl{MSC Number}: 05A19 
\end{abstract}

\section{Introduction} \label{intro}

\noindent A statistic over a group is a function from that group to the set of nonnegative integers. One of the most studied statistics is the number of inversions on the symmetric group $\mathfrak{S}_n$ defined by $\mathtt{inv}\,\sigma := \#\big\{(i,j) \in [n]^2\ |\ i<j,\, \sigma(i)>\sigma(j)\big\}$. A well-known result is the equidistribution of $\mathtt{inv}$ with the statistic major index proved by Foata \cite{Fo}. In this article, we give a generalization of this statistic on the colored permutation group in order to create a more general code. The colored permutation group of $m$ colors and $n$ elements is the wreath product $\mathbb{U}_m \wr \mathfrak{S}_n$ of the group $\mathbb{U}_m$ of all $m^{th}$ roots of unity by the symmetric group $\mathfrak{S}_n$ on $[n]$. We represent an element $\pi \in \mathbb{U}_m \wr \mathfrak{S}_n$ by $$\pi = \left( \begin{array}{cccc} 1 & 2 & \dots & n\\
\zeta_{k_1} \sigma(1) & \zeta_{k_2} \sigma(2) & \dots & \zeta_{k_n} \sigma(n) \end{array} \right)\ \text{with}\ \sigma \in \mathfrak{S}_n\ \text{and}\ \zeta_{k_j} = e^{2\pi i \frac{k_j}{m}}.$$

\noindent For two integers $i<j$, let $[i,j] := \{i, i+1, \dots, j\}$.

\begin{definition}
The \emph{number of $i$-pseudoinversions} $\mathtt{pinv}_i\,\pi$ of a colored permutation $\pi \in \mathbb{U}_m \wr \mathfrak{S}_n$ is
$$\mathtt{pinv}_i\,\pi := k_i(n-i+1) + \big|\{j \in [i+1,n]\ |\ \sigma(i) > \sigma(j)\}\big|.$$
And the \emph{number of pseudoinversions} $\mathtt{pinv}\,\pi$ of a colored permutation $\pi$ in $\mathbb{U}_m \wr \mathfrak{S}_n$ is
$$\mathtt{pinv}\,\pi := \sum_{i=1}^n \mathtt{pinv}_i\,\pi.$$
\end{definition}

\begin{example} \label{Ex1}
Consider the element $\pi = \left( \begin{array}{cccc} 1 & 2 & 3 & 4 \\
2 & \zeta_1 1 & \zeta_4 4 & 3 \end{array} \right) \in \mathbb{U}_5 \wr \mathfrak{S}_4$. We have $\mathtt{pinv}_1\,\pi = 1$, $\mathtt{pinv}_2\,\pi = 3$, $\mathtt{pinv}_3\,\pi = 9$, $\mathtt{pinv}_4\,\pi = 0$, and $\mathtt{pinv}\,\pi = 13$.
\end{example}

\noindent The interest for investigating statistics on $\mathbb{U}_m \wr \mathfrak{S}_n$ recently arised. Bagno et al., for example, introduced the statistics $(c,d)$-descents and computed their distributions \cite[Proposition 1.1.]{BaGaMa}. We construct a number system by means of the cardinality $|\mathbb{U}_m \wr \mathfrak{S}_n|$, and the statistic $\mathtt{pinv}_i$.

\begin{definition}
Let $\mathsf{A} = (\mathsf{A}_i)_{i \in \mathbb{N}}$, $\mathsf{a} = (\mathsf{a}_i)_{i \in \mathbb{N}}$ be two sequences of positive integers. The pair $(\mathsf{A}, \mathsf{a})$ is a number system if, for every integer $n \in \mathbb{N}$, there exists $k \in \mathbb{N}$ such that $\displaystyle n = \sum_{i=0}^{k} \alpha_i \mathsf{A}_i$ with $\alpha_i \in [0, \mathsf{a}_i]$, and this representation in terms of $\mathsf{A}_i$'s and $\alpha_i$'s is unique.
\end{definition}

\noindent Using formal power series, Cantor provided a condition for a pair of positive integer sequences to be a number system \cite[§.2.]{Ca}. We provide a more suitable condition for a pair of positive integer sequences to be a number system.

\begin{proposition}\label{PrNu}
Let $\mathsf{A} = (\mathsf{A}_i)_{i \in \mathbb{N}}$ and $\mathsf{a} = (\mathsf{a}_i)_{i \in \mathbb{N}}$ be two sequences of strictly positive integers. The pair of sequences $(\mathsf{A}, \mathsf{a})$ is a number system if and only if $\mathsf{A}_0 = 1$ and
$$\mathsf{A}_k = \prod_{i=0}^{k-1}(1+ \mathsf{a}_i).$$
\end{proposition}

\noindent We prove Proposition \ref{PrNu} in Section \ref{SeNuSy}. 

\begin{corollary}
The pair of sequences $(\mathsf{G}, \mathsf{g}) = \big((\mathsf{G}_i)_{i \in \mathbb{N}}, (\mathsf{g}_i)_{i \in \mathbb{N}}\big)$ defined by
$$\mathsf{G}_i = m^i i! \quad \text{and} \quad \mathsf{g}_i = m(i+1)-1$$
is a number system.
\end{corollary}

\begin{proof}
We obtain the equality of Proposition \ref{PrNu} from $$\prod_{i=0}^{k-1}(1+ \mathsf{g}_i) = \prod_{i=0}^{k-1} m(i+1) = m^k k! = \mathsf{G}_i.$$
\end{proof}

\noindent We particularly use the number system $(\mathsf{G}, \mathsf{g})$ in this article. The number $\mathsf{G}_i$ is the cardinality of $\mathbb{U}_m \wr \mathfrak{S}_i$, and $\mathsf{g}_i$ is the maximal value of $\mathtt{pinv}_1$ relating to $\mathbb{U}_m \wr \mathfrak{S}_{i+1}$. The factorial number system introduced by Laisant \cite{La} corresponds to the case $m = 1$ of $(\mathsf{G}, \mathsf{g})$. We also construct a numeral system by means of the groups $\mathbb{U}_m \wr \mathfrak{S}_n$ and the statistic $\mathtt{pinv}_i$.

\begin{definition}
A numeral system is a notation for representing integers.
\end{definition}

\noindent There exist several types of numeral systems depending on the historical context and the geographical location. We develop a numeral system based on the colored permutation groups. Write $\gamma_k \centerdot \gamma_{k-1} \centerdot \dots \centerdot \gamma_1 \centerdot \gamma_0$ for the integer $\sum_{i=0}^k \gamma_i \mathsf{G}_i$, and $\langle\mathsf{G}, \mathsf{g}\rangle_k$ for the set
$$\langle\mathsf{G}, \mathsf{g}\rangle_k := \big\{\gamma_k \centerdot \gamma_{k-1} \centerdot \dots \centerdot \gamma_1 \centerdot \gamma_0 \ |\ \gamma_i \in \llbracket 0;\, \mathsf{g}_i \rrbracket \big\}.$$
Our numeral system stems from the following bijection.

\begin{theorem} \label{B2thm}
Let $n \geq 1$. The following map is bijective
$$g: \left. \begin{array}{ccc} \mathbb{U}_m \wr \mathfrak{S}_n & \rightarrow & \langle\mathsf{G}, \mathsf{g}\rangle_{n-1} \\ 
\pi & \mapsto & \mathtt{pinv}_1(\pi) \centerdot \mathtt{pinv}_2(\pi) \centerdot \dots \centerdot \mathtt{pinv}_n(\pi) \end{array} \right..$$
\end{theorem}

\noindent We prove Theorem \ref{B2thm} in Section \ref{SeNum}. The Lehmer code is based on the case $m=1$. It is a particular way to encode each permutation of $n$ numbers, and an instance of a scheme for numbering permutations. Moreover, Vajnovszki provided several permutation codes directly related to the Lehmer code \cite{Va}.

\begin{example}
From Example \ref{Ex1}, we have $\pi = \left( \begin{array}{cccc} 1 & 2 & 3 & 4 \\
2 & \zeta_1 1 & \zeta_4 4 & 3 \end{array} \right) = 1 \centerdot 3 \centerdot 9 \centerdot 0$ which, is equal to $945$ in decimal system.
\end{example}

\noindent Moreover, we obtain the generating function of the statistic number of pseudoinversions. Denote the $q$-analog of the number $i$ by $[i]_q := 1 + q + \dots + q^{i-1}$.

\begin{corollary} \label{Poin}
The generating function of the statistic $\mathtt{pinv}$ on $\mathbb{U}_m \wr \mathfrak{S}_n$ is
$$\sum_{\pi \in \mathbb{U}_m \wr \mathfrak{S}_n} q^{\mathtt{pinv}\,\pi} = \prod_{i=1}^n [mi]_q.$$
\end{corollary}

\begin{proof}
The bijection of Theorem \ref{B2thm} implies that every element of $\mathbb{U}_m \wr \mathfrak{S}_n$ has a unique representation in $\langle\mathsf{G}, \mathsf{g}\rangle_{n-1}$. Then, we have
$$\sum_{\pi \in \mathbb{U}_m \wr \mathfrak{S}_n} q^{\mathtt{pinv}\,\pi} = [\max \mathtt{pinv}_n +1]_q \dots [\max \mathtt{pinv}_2 +1]_q\, [\max \mathtt{pinv}_1 +1]_q = \prod_{i=1}^n [mi]_q.$$
\end{proof}

\section{Proof of Proposition \ref{PrNu}} \label{SeNuSy}

\noindent We provide a condition for a pair of integer sequences $(\mathsf{A}, \mathsf{a})$ to be a number system. 

\begin{lemma}\label{divrem}
Take a number system $(\mathsf{A}, \mathsf{a})$, and let $n = \alpha_k \centerdot \dots \centerdot \alpha_1 \centerdot \alpha_0$ be a nonnegative integer. Then, $$\alpha_k \mathsf{A}_k \leq n < (\alpha_k + 1)\mathsf{A}_k.$$
\end{lemma}

\begin{proof}
It is clear that $\alpha_k \mathsf{A}_k \leq n$. Suppose that $n \geq (\alpha_k + 1)\mathsf{A}_k$ which means $\sum_{i=0}^{k-1} \alpha_i\, \mathsf{A}_i \geq \mathsf{A}_k$. Then, there exist $\lambda_i$ such that $0 \leq \lambda_i \leq \alpha_i$ and $\lambda_{k-1} \centerdot \dots \centerdot \lambda_1 \centerdot \lambda_0 = 1 \centerdot \overbrace{0 \centerdot \dots \centerdot 0 \centerdot 0}^{k\ \text{times}}$. That contradicts the unicity of the representation.
\end{proof}

\begin{lemma}\label{ifonlyif}
Let $\mathsf{A} = (\mathsf{A}_i)_{i \in \mathbb{N}}$, $\mathsf{a} = (\mathsf{a}_i)_{i \in \mathbb{N}}$ be two sequences of positive integers. Then, the pair $(\mathsf{A}, \mathsf{a})$ is a number system if and only if $\mathsf{A}_0 = 1$ and 
$$\mathsf{A}_k = \sum_{i=0}^{k-1} \mathsf{a}_i \mathsf{A}_i + 1.$$ 
\end{lemma}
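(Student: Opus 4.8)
The plan is to prove both implications by analysing, for each $k$, the finite set $R_{k-1}$ of integers representable using only the positions $0,1,\dots,k-1$, and to track the quantity $M_{k-1}:=\sum_{i=0}^{k-1}\mathsf{c}_i\mathsf{Q}_i$, which is the largest such integer (obtained by setting every digit to its maximum $\mathsf{c}_i$). The backbone of the argument is the auxiliary claim, proved by induction on $k$, that in a genuine number system one has $R_{k-1}=\{0,1,\dots,M_{k-1}\}$ with pairwise distinct representations. The recurrence $\mathsf{Q}_k=M_{k-1}+1$ then records precisely the fact that the first integer not reachable below position $k$ is $M_{k-1}+1$, which must therefore be the value $\mathsf{Q}_k$ carried by the new digit.

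For the direction $(\Leftarrow)$, I would assume $\mathsf{Q}_0=1$ together with $\mathsf{Q}_k=M_{k-1}+1$ and build the representation directly. The base case is immediate: since $\mathsf{Q}_0=1$, position $0$ alone realises $R_0=\{0,1,\dots,\mathsf{c}_0\}$ bijectively. For the inductive step I would use that $\mathsf{Q}_k=M_{k-1}+1$ equals the length of the interval $\{0,\dots,M_{k-1}\}$: the translates $\gamma_k\mathsf{Q}_k+\{0,\dots,M_{k-1}\}$ for $\gamma_k=0,1,\dots,\mathsf{c}_k$ are consecutive blocks, the block indexed by $\gamma_k$ ending at $(\gamma_k+1)\mathsf{Q}_k-1$ and the next starting at $(\gamma_k+1)\mathsf{Q}_k$, so they tile $\{0,\dots,M_k\}$ with neither gaps nor overlaps. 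This simultaneously yields existence and uniqueness for positions $0,\dots,k$, and since $M_k\to\infty$ every $n\in\NN$ is eventually captured; hence $(\mathsf{Q},\mathsf{c})$ is a number system.

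For the direction $(\Rightarrow)$, I would first pin down $\mathsf{Q}_0=1$ from the representation of the integer $1$, whose unique expansion must place a single unit in the lowest position. I would then establish the recurrence by two inequalities. The upper bound $\mathsf{Q}_k\le M_{k-1}+1$ follows cleanly from Lemma \ref{divrem}: the expansion of $\mathsf{Q}_k-1$ cannot use position $k$, for otherwise its leading term would already be at least $\mathsf{Q}_k>\mathsf{Q}_k-1$; hence $\mathsf{Q}_k-1\le M_{k-1}$. For the lower bound $\mathsf{Q}_k\ge M_{k-1}+1$ I would invoke the inductive description $R_{k-1}=\{0,\dots,M_{k-1}\}$: if we had $\mathsf{Q}_k\le M_{k-1}$, then $\mathsf{Q}_k$ would admit a representation using only positions below $k$, contradicting the uniqueness of its expansion $1:0_{k-1}:\dots:0_0[\mathsf{Q}]$. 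Combining the two inequalities gives $\mathsf{Q}_k=M_{k-1}+1$ and closes the induction.

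I expect the main obstacle to lie in the forward direction, precisely in justifying the contiguity statement $R_{k-1}=\{0,1,\dots,M_{k-1}\}$ rather than merely the bound $\max R_{k-1}=M_{k-1}$, and, in tandem with it, the base fact $\mathsf{Q}_0=1$. Lemma \ref{divrem} controls only the leading digit and hence delivers the upper estimate $\mathsf{Q}_k\le M_{k-1}+1$ directly, but the reverse estimate needs that every value below $M_{k-1}$ is genuinely attained with low positions; this surjectivity onto an interval is not supplied by Lemma \ref{divrem} alone and must be transported through the induction hypothesis. Once the interval description is secured, the recurrence $\mathsf{Q}_k=M_{k-1}+1$ is forced, and unwinding $M_{k-1}=\mathsf{Q}_k-1$ into $M_k=(1+\mathsf{c}_k)\mathsf{Q}_k-1$ matches the growth needed to recover the product form $\mathsf{Q}_k=\prod_{i=0}^{k-1}(1+\mathsf{c}_i)$ of Theorem \ref{firsttheorem}.
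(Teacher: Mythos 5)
Your backward direction is fine: the block-tiling argument (consecutive translates $\gamma_k\mathsf{Q}_k+\{0,\dots,M_{k-1}\}$ covering $\{0,\dots,M_k\}$ without gaps or overlaps, where $M_{k-1}=\sum_{i=0}^{k-1}\mathsf{c}_i\mathsf{Q}_i$) is actually more rigorous than the paper's one-line induction. The genuine gap is in your forward direction, in the step you call clean. You argue that the expansion of $\mathsf{Q}_k-1$ \emph{cannot use position $k$}, and from this conclude $\mathsf{Q}_k-1\le M_{k-1}$. But that conclusion needs the expansion to avoid \emph{all} positions $m\ge k$, and for a leading position $m>k$ the leading term is $\gamma_m\mathsf{Q}_m\ge\mathsf{Q}_m$, which exceeds $\mathsf{Q}_k-1$ only if you already know $\mathsf{Q}_m\ge\mathsf{Q}_k$. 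Monotonicity of $\mathsf{Q}$ is not an assumption; it has to be proved. In your architecture it is structurally unavailable at that moment: your lower bounds on the $\mathsf{Q}_m$ come from the contiguity claim $R_{m-1}=\{0,\dots,M_{m-1}\}$, which is carried by induction, so at step $k$ you know nothing about any $\mathsf{Q}_m$ with $m>k$. The induction is circular exactly here: the upper bound at index $k$ needs lower bounds at indices above $k$, which your induction produces only later. (Your claim that the unique expansion of $1$ ``must place a single unit in the lowest position'' begs the same question: without increasingness, nothing forces the position carrying the value $1$ to be position $0$.)

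The missing idea --- and what the paper actually does --- is that the roles of the two inequalities of Lemma \ref{divrem} are the reverse of what you wrote: it is the \emph{lower} bound, not the upper one, that Lemma \ref{divrem} delivers directly, for every index simultaneously and with no induction. Apply the second inequality to $n=\sum_{i=0}^{k}\mathsf{c}_i\mathsf{Q}_i$, whose representation has leading digit $\mathsf{c}_k$ at position $k$; this gives $\sum_{i=0}^{k}\mathsf{c}_i\mathsf{Q}_i<(\mathsf{c}_k+1)\mathsf{Q}_k$, i.e.\ $M_{k-1}+1\le\mathsf{Q}_k$ for all $k$ at once. That immediately makes $\mathsf{Q}$ strictly increasing, and then the first inequality finishes everything: the representation of $n=M_{k-1}+1$ cannot have leading position $m<k$ (it would be $\le M_{k-1}$), while for $m\ge k$ the first inequality gives $\mathsf{Q}_m\le n$ and the global lower bound gives $\mathsf{Q}_m\ge M_{m-1}+1\ge M_{k-1}+1=n$, with strict inequality when $m>k$; hence $m=k$ and $\mathsf{Q}_k=M_{k-1}+1$ (the case $k=0$ of the same argument yields $\mathsf{Q}_0=1$). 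Note two consequences: first, your contiguity backbone $R_{k-1}=\{0,\dots,M_{k-1}\}$ is not needed in the forward direction at all --- it earns its keep only in the backward direction, where your tiling uses it correctly; second, the gap is not cosmetic, since nothing in the definition a priori forbids a later position from carrying a smaller value, and it is precisely the global lower bound extracted from Lemma \ref{divrem} that rules this out.
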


\begin{proof}
Suppose that $(\mathsf{A}, \mathsf{a})$ is a number system. It is obvious that we must have $\mathsf{A}_0 = 1$. From the second inequality of Lemma \ref{divrem}, we deduce that
$$\sum_{i=0}^k \mathsf{a}_i \mathsf{A}_i + 1 \leq (\mathsf{a}_k +1)\mathsf{A}_k \quad \text{i.e.} \quad
\sum_{i=0}^{k-1} \mathsf{a}_i \mathsf{A}_i + 1 \leq \mathsf{A}_k.$$
From the first inequality of Lemma \ref{divrem}, we deduce that the only possibility is 
$$\sum_{i=0}^{k-1} \mathsf{a}_i \mathsf{A}_i + 1 = \mathsf{A}_k.$$
Now, suppose that $\mathsf{A}_0 = 1$ and $\displaystyle \mathsf{A}_k = \sum_{i=0}^{k-1} \mathsf{a}_i \mathsf{A}_i + 1$. Then one can uniquely construct every positive integer by induction:
$$\text{if}\ n = \sum_{i=0}^k \alpha_i\, \mathsf{A}_i \text{, then}\ n+1 = \sum_{i=0}^k \alpha_i\, \mathsf{A}_i + 1 \in \langle\mathsf{A}, \mathsf{a}\rangle_{k+1}.$$
\end{proof}

\noindent We can now proceed to the proof of Proposition \ref{PrNu}:

\begin{proof}
From Lemma \ref{ifonlyif}, we deduce that $(\mathsf{A}, \mathsf{a})$ is a number system if and only if $\mathsf{A}_0 = 1$ and 
\begin{align*}
\mathsf{A}_k = & \sum_{i=0}^{k-1} \mathsf{a}_i \mathsf{A}_i + 1
= \mathsf{a}_{k-1} \mathsf{A}_{k-1} + \sum_{i=0}^{k-2} \mathsf{a}_i \mathsf{A}_i + 1
= \mathsf{a}_{k-1} \mathsf{A}_{k-1} + \mathsf{A}_{k-1}\\
= & (\mathsf{a}_{k-1} + 1) \mathsf{A}_{k-1}
= (\mathsf{a}_{k-1} + 1) (\mathsf{a}_{k-2} + 1) \dots (\mathsf{a}_0 + 1)
= \prod_{i=0}^{k-1}(1+ \mathsf{a}_i).
\end{align*}
\end{proof}

\section{Proof of Theorem \ref{B2thm}} \label{SeNum}

\noindent We prove that the map $g: \mathbb{U}_m \wr \mathfrak{S}_n \rightarrow \langle\mathsf{G}, \mathsf{g}\rangle_{n-1}$ is bijective, or in other words, for a number $\gamma_{n-1} \centerdot \gamma_{k-1} \centerdot \dots \centerdot \gamma_1 \centerdot \gamma_0$, there exists a colored permutation $\pi = \left( \begin{array}{cccc} 1 & 2 & \dots & n\\
\zeta_{k_1} \sigma(1) & \zeta_{k_2} \sigma(2) & \dots & \zeta_{k_n} \sigma(n) \end{array} \right)$ such that $$\mathtt{pinv}_1(\pi) \centerdot \mathtt{pinv}_2(\pi) \centerdot \dots \centerdot \mathtt{pinv}_n(\pi) = \gamma_{n-1} \centerdot \gamma_{k-1} \centerdot \dots \centerdot \gamma_1 \centerdot \gamma_0.$$

\begin{lemma} \label{LeCo}
Let $\gamma_i \in [0, \mathsf{g}_i]$. Then, there exist
\begin{itemize}
\item $k \in [0, m-1]$ such that $\gamma_i \in \big[k(i+1), k(i+1)+i\big]$,
\item and $\pi \in \mathbb{U}_m \wr \mathfrak{S}_n$ such that $k_{n-i} = k$.
\end{itemize}
\end{lemma}

\begin{proof}
On one side, $\mathtt{pinv}_{n-i}(\mathbb{U}_m \wr \mathfrak{S}_n) = [0, \mathsf{g}_i]$. On the other side, from its definition, we have $$\mathtt{pinv}_{n-i}\,\pi \in \big[k(i+1), k(i+1)+i\big] \ \Leftrightarrow \ k_{n-i} = k.$$ 
\end{proof}

\noindent We obtain the $\zeta_{k_i}$'s associated to $\gamma_{n-1} \centerdot \gamma_{k-1} \centerdot \dots \centerdot \gamma_1 \centerdot \gamma_0$ from Lemma \ref{LeCo}. It remains to determine the $\sigma(i)$'s. 

\smallskip

\noindent Let $s_i :=  \gamma_{n-i} - k(n-i+1)$. We need $n$ variables $x_1, \dots, x_n$ to replace each each of them in $x_1 > x_2 > \dots > x_n$ with the corresponding $\sigma(i)$. The notation $x_i \leftarrow \sigma(j)$ means we assign the value $\sigma(j)$ to the variable $x_i$. We implemente the following procedure from $n$ down to $1$:

\smallskip

\noindent \textsf{Put $x_n \leftarrow \sigma(n)$: we obtain $x_1 > x_2 > \dots > \sigma(n)$.\\
For $\sigma(n-1)$,
\begin{itemize}
\item[$\bullet$] If $s_{n-1}=1$, put $x_{n-1} \leftarrow \sigma(n-1)$: we obtain $x_1 > \dots > \sigma(n-1) > \sigma(n)$.
\item[$\bullet$] Else put $x_{n-1} \leftarrow \sigma(n)$, $x_n \leftarrow \sigma(n-1)$: we obtain 
$$x_1 > \dots > x_{n-2} > \sigma(n) > \sigma(n-1).$$
\end{itemize}
Recursively, for $\sigma(j)$,
\begin{itemize}
\item[$\bullet$] If $s_j=n-j$, put $x_{j} \leftarrow \sigma(j)$: we obtain $$x_1 > \dots > x_{j-1} > \sigma(j) > \overbrace{ \dots}^{\text{followed by}\ n-j\ \sigma(i)\text{'s}}.$$
\item[$\bullet$] Else, for every $i \in [n-j+1, n-s_j]$, put $x_{i-1} \leftarrow x_i$, and put $x_{n-s_j} \leftarrow \sigma(j)$: we obtain $$x_1 > \dots > x_{j-1} > \overbrace{\dots}^{\text{followed by}\ n-j-s_j\ \sigma(i)\text{'s}} > \sigma(j) > \overbrace{ \dots}^{\text{followed by}\ s_j\ \sigma(i)\text{'s}}.$$
\end{itemize}
At the end, we obtain a complete order of the $\sigma(i)$'s which means their values.}

\section{Application to Cryptography}

\noindent Here is an example of cryptographic coding based on Theorem \ref{B2thm}. Suppose that we want to encrypt a mail of $c$ characters written with $l$ symbols including letters, numbers and space. Consider the colored permutation group $\mathbb{U}_l \wr \mathfrak{S}_c$ and its corresponding number system $\langle\mathsf{G}, \mathsf{g}\rangle_{c-1}$. The mail can be considered as the element $x = \gamma_{c-1} \centerdot \dots \centerdot \gamma_1 \centerdot \gamma_0$ of $\langle\mathsf{G}, \mathsf{g}\rangle_{c-1}$ where $\gamma_{c-i}$ is the symbol order of the $i^{\text{th}}$ character. Choose a key $\kappa \in \mathbb{U}_l \wr \mathfrak{S}_c$. We define the encrypting function $\mathfrak{e}:\langle\mathsf{G}, \mathsf{g}\rangle_{c-1} \rightarrow \mathbb{U}_l \wr \mathfrak{S}_c$ by the function composition
$$\mathfrak{e}: \left. \begin{array}{ccccc} \langle\mathsf{G}, \mathsf{g}\rangle_{c-1} & \rightarrow & \mathbb{U}_l \wr \mathfrak{S}_c & \rightarrow & \mathbb{U}_l \wr \mathfrak{S}_c \\ 
m & \mapsto & g^{-1}(x) & \mapsto & g^{-1}(x) \circ \kappa \end{array} \right..$$
The crypted message is $y = \mathfrak{e}(x)$. We define the decrypting function $\mathfrak{d}:\mathbb{U}_l \wr \mathfrak{S}_c \rightarrow \langle\mathsf{G}, \mathsf{g}\rangle_{c-1}$ by
$$\mathfrak{d}: \left. \begin{array}{ccccc} \mathbb{U}_l \wr \mathfrak{S}_c & \rightarrow & \mathbb{U}_l \wr \mathfrak{S}_c & \rightarrow & \langle\mathsf{G}, \mathsf{g}\rangle_{c-1} \\ 
y & \mapsto & y \circ \kappa^{-1} & \mapsto & g(y \circ \kappa^{-1}) \end{array} \right..$$

\bibliographystyle{abbrvnat}

\end{document}